\newtheorem{thm}{Theorem}[section]
\newtheorem{cor}[thm]{Corollary}
\newtheorem{lem}[thm]{Lemma}
\newtheorem{prop}[thm]{Proposition}
\theoremstyle{definition}
\newtheorem{defn}[thm]{Definition}
\newtheorem*{rem*}{Remark}
\newtheorem*{thmA}{Theorem A}
\numberwithin{equation}{section}
\definecolor{OrangeRed}{cmyk}{0,0.6,1,0}            
\definecolor{DarkBlue}{cmyk}{1,1,0,0.20}
\definecolor{DarkGreen}{cmyk}{1,0,0.6,0.2}
\definecolor{myblue}{rgb}{0.66,0.78,1.00}
\definecolor{Violet}{cmyk}{0.79,0.88,0,0}
\definecolor{Lavender}{cmyk}{0,0.48,0,0}
\renewcommand{\epsilon}{\varepsilon}
\renewcommand{\phi}{\varphi}
\title{Orbits inside Fatou sets }
\author{   
	John Erik Forn\ae ss,\\
	\small Department of Mathematical Sciences, Norwegian University of Science and Technology,\\
	 \small Trondheim, 7034, Norway\\
	\and	Mi Hu 
	\thanks{The author is partially supported by Young Research Talents grant 300814 from the Research Council of Norway.} \\
	\small Department of Mathematics, University of Oslo, Oslo, 0371, Norway\\
	\small  Department of Mathematical, Physical and Computer Sciences, University of Parma, \\
	\small Parma, 43124, Italy\\
	\small	In memory of Professor Zhihua Chen.
	}
\begin{document}

		\maketitle
		\begin{abstract}
			In this paper, we investigate the precise behavior of orbits inside attracting basins of rational functions on $\mathbb P^1$ and  entire functions $f$ in $\mathbb{C}$. 
			
				Let $R(z)$ be a rational function on $\mathbb P^1$,
			$\mathcal {A}(p)$ be the basin of attraction of an attracting fixed point $p$ of $R$, and $\Omega_i$ $ (i=1, 2, \cdots)$ be the connected components of $\mathcal{A}(p)$, and $\Omega_1$ contains $p.$ Let $p_0\in\Omega_1$ be close to $p.$
			If at least one $\Omega_i$ is not simply connected, then there exists a constant $C$ so that for any $z_0\in \Omega_i$, there
			is a point $q\in \cup_k R^{-k}(p_0), k\geq0$ so that the Kobayashi distance $d_{\Omega_i}(z_0, q)\leq C.$ If all $\Omega_i$ are simply connected, then the result is the same as for polynomials and is treated in an earlier paper.
			
			For  entire functions $f$, we generally can not have similar results as for  rational functions. However, if $f$ has finitely many critical points, then  similar results hold.

		\end{abstract}

		\section{Introduction}\label{sec1}
		
		A general goal in discrete dynamical systems is to qualitatively and quantitatively describe the possible dynamical behaviour under the iteration of maps satisfying certain conditions.

		Let $\mathbb P^1=\mathbb{C}\cup\{\infty\}, R: \mathbb P^1 \rightarrow \mathbb P^1$ be a nonconstant holomorphic map, and $R^{ n}:\mathbb P^1 \rightarrow \mathbb P^1$ be its $n$-fold iterate. In complex dynamics, two crucial disjoint invariant sets are associated with $R$, the {\sl Julia set} and the {\sl Fatou set} \cite{RefB, RefBM, RefCG, RefM}, which partition the sphere $\mathbb P^1$.
		The Fatou set of $R$ is defined as the largest open set where the family of iterates is locally normal. In other words, for any point $z\in \mathbb P^1$, there exists some neighborhood $U$ of $z$ so that the sequence of iterates of the map restricted to $U$ forms a normal family, so the iterates are well-behaved. 
		The complement of the Fatou set is called the Julia set. 
		The connected components of the Fatou set of $R$ are called {\sl Fatou components}. 
		A Fatou component $\Omega\subset \mathbb P^1$ of $R$ is {\sl invariant} if $R(\Omega)=\Omega$. 
		The sequence $\{z_n\}=\{z_1=R(z_0), z_2=R^2(z_0), \cdots\}$, where $z\in\mathbb P^1$, is referred to as the orbit of the point $z=z_0$.
		If there exists an integer $N$ such that $z_N=z_0$, we classify $z_0$ as a periodic point of $R$.
		In the special case where $N=1$, $z_0$ is then recognized as a fixed point of $R$.
		
		In the early 20th century, Fatou \cite{RefFatou} and Julia
		started to conduct a classification of all potential invariant
		Fatou components for rational functions on the Riemann sphere. In the 1930’s,  Cremer completed the classification of rotation domains using the Uniformization Theorem.  The existence of rotation domains was proved even later than	that, by Siegel and Herman. The classification of invariant Fatou components \cite{RefB, RefBM, RefCG, RefM} shows that if $f$ is a rational map of degree $d\geq 2$ and $\Omega=f(\Omega)$ is an invariant component of the Fatou set, then it is one of the following cases:
		\begin{enumerate}
			\item (attracting case) $\Omega$ contains a fixed point $p,$ and the orbit of every point in $\Omega$ converges to $p$. 
			\item (parabolic case) $\partial\Omega$ contains a fixed point $p,$ and the orbit of every point in $\Omega$ converges to $p$.  
			\item (rotation domain) $\Omega$ is conformally equivalent to the unit disk or an annulus, and the map is conjugate to an irrational rotation.
		\end{enumerate}
		
		The classification of Fatou components was completed in the 1980s when Sullivan proved that every Fatou component of a rational map is preperiodic, i.e., there are $n,m \in \mathbb{N}$ such that $R^{n+m}(\Omega)=R^m(\Omega)$. For a comprehensive understanding of additional details and results, see the paper \cite{RefS} for further information.

		However, there have been few detailed studies until now of the more precise behavior of orbits inside the Fatou set. 
		For example, let $\mathcal{A}(p):=\{z\in \mathbb{C}; f^n(z)\rightarrow p\}$ be the {\sl basin of attraction} of an attracting fixed point $p$. 
		An intriguing question arises when $z_0$ is in proximity to $\partial\mathcal{A}(p)$: what is the  behavior of the orbits ${z_n}$ starting from $z_0$ and approaching the vicinity of the attracting fixed point $p$?
		Specifically, if an orbit ${z_n}$ starting from $z_0$ eventually reaches the point $p$, we can gain insight into the behavior of the orbit of $z_0$ by examining the backward orbit of the fixed point $p$. However, there are cases where the orbit $\{z_n\}$ of $z_0$ approaches $p$ indefinitely without ever reaching it. In such situations, when considering the behavior of the orbit of $z_0$, it becomes uncertain whether we can rely on the backward orbit of the fixed point $p$ instead. Consequently, determining how to handle these situations becomes an interesting question. Additionally, one might wonder how many iterations are required to reach this uncertain state.
		One application arises from Newton's method in \cite{RefB}. 
		It is of practical interest to know how many times Newton's method must be iterated to get the desired approximation of the root. 
		
		In the paper \cite{RefH1}, Hu investigated the precise behavior of orbits inside attracting basins of polynomials in $\mathbb{C}$. Let $f$ be a holomorphic polynomial of degree $m\geq2$ in $\mathbb{C}$, $\mathcal {A}(p)$ be the basin of attraction of an attracting fixed point $p$ of $f$, and $\Omega_i$ $(i=1, 2, \cdots)$ be the connected components of $\mathcal{A}(p)$. Assume $\Omega_1$ contains $p$ and $\{f^{-1}(p)\}\cap \Omega_1\neq\{p\}$. Then there is a constant $C$ so that for every point $z_0$ inside any $\Omega_i$, there exists a point $q\in \cup_k f^{-k}(p)$ inside $\Omega_i$ such that $d_{\Omega_i}(z_0, q)\leq C$, where $d_{\Omega_i}$ is the Kobayashi distance on $\Omega_i.$ It shows that in an attracting basin of a complex polynomial in $\mathbb{C}$, the backward orbit of the attracting fixed point either is the point itself or accumulates
		at the boundary of all the components of the basin in such a way that all points of the basin lie within a uniformly bounded distance of the backward orbit, measured with respect to the Kobayashi metric. However, in paper \cite{RefH2}, Hu proved that this result is not valid for parabolic basins of polynomials in $\mathbb{C}$.
		Hu \cite{RefH3} also studied similar problems in higher dimensions and obtained a variety of interesting results. 
		
		It is natural to inquire about the precise behavior of orbits within the attracting basins of rational functions on $\mathbb P^1$. Exploring these questions forms the primary focus of this paper. In the second section, we give some basic definitions of the Kobayashi metric. Then, we discuss when a basin of infinity is not simply connected for polynomials in the third section. In the fourth section, we delve into the dynamics of rational functions within the attracting basins and establish our main result, namely Theorem A.

		\begin{thmA}
			Let $R(z)$ be a rational function on $\mathbb P^1$,
			$\mathcal {A}(p)$ be the basin of attraction of an attracting fixed point $p$ of $R$, and $\Omega_i$ $ (i=1, 2, \cdots)$ be the connected components of $\mathcal{A}(p)$. Assume $\Omega_1$ contains $p.$
			
			If all $\Omega_i$ are simply connected  and $\{R^{-1}(p)\}\cap \Omega_1\neq\{p\}$, then there is a constant $C$ so that for every point $z_0$ inside any $\Omega_i$, there exists a point $q\in \cup_k R^{-k}(p)$ inside $\Omega_i$ such that $d_{\Omega_i}(z_0, q)\leq C$, where $d_{\Omega_i}$ is the Kobayashi distance on $\Omega_i.$

			Suppose all $\Omega_i$ are simply connected  and $\{R^{-1}(p)\}\cap \Omega_1=\{p\}$; 	or at least one $\Omega_i$ is not simply connected. Let $p_0\in\Omega_1$ close to $p.$ Then there exists a constant $C$ so that for any $z_0\in \Omega_i$, there
			is a point $q\in \cup_k R^{-k}(p_0), k\geq0$ so that the Kobayashi distance
			$d_{\Omega_i}(z_0, q)\leq C.$
		\end{thmA}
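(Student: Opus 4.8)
The plan is to reduce everything to the component $\Omega_1$ containing $p$ and then transport bounds to the other components via the dynamics. The first step is local: near the attracting fixed point $p$ the map $R$ is conjugate (Kœnigs, or in the superattracting case Böttcher) to a linear (resp. monomial) map on a small disk $\Delta$ around $p$, so inside $\Delta$ the backward orbit $\cup_k R^{-k}(p_0)$ of a point $p_0$ close to $p$ is, in the Kobayashi (= Poincaré) metric of $\Delta$, a net: every point of $\Delta'\Subset\Delta$ lies within a bounded Poincaré distance of some preimage $R^{-k}(p_0)$. The reason is that the $k$-th preimages of $p_0$ under the linearized model are spread out at Poincaré-distance $O(1)$ from one another once $k$ is large, and $d_\Delta$ dominates $d_{\Omega_1}$ so this gives a net for $d_{\Omega_1}$ as well on $\Delta'$. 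This is where the hypothesis ``$p_0$ close to $p$'' (rather than $p_0=p$) matters: when $\{R^{-1}(p)\}\cap\Omega_1=\{p\}$ the point $p$ has no other preimages, so its own backward orbit is useless, but a nearby $p_0$ does have a rich backward orbit inside $\Delta$.

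The second step propagates this net over all of $\Omega_1$. Given $z_0\in\Omega_1$, its forward orbit $R^n(z_0)\to p$, so there is a first time $n_0$ with $R^{n_0}(z_0)\in\Delta'$; by Step 1 there is $q'\in\cup_k R^{-k}(p_0)$ with $d_{\Omega_1}(R^{n_0}(z_0),q')\le C_0$. Now pull back: $R$ is distance-nonincreasing for the Kobayashi metric, $d_{\Omega_1}(R(x),R(y))\le d_{\Omega_1}(x,y)$, but we need the reverse inequality to conclude $d_{\Omega_1}(z_0,q)\le C_0$ for a suitable $q\in R^{-n_0}(q')$. This is exactly the obstacle, and it is handled the same way as in the polynomial case \cite{RefH1}: one does not pull all the way back through $n_0$ steps uniformly, but instead uses that $R\colon\Omega_1\to\Omega_1$ (restricted appropriately) is a proper holomorphic map of some degree, hence an orbifold covering away from the critical points, so distances are distorted by a controlled amount; alternatively one notes that the orbit $z_0,R(z_0),\dots,R^{n_0}(z_0)$ stays in the compact part $\Omega_1\setminus\Delta'$ for a bounded number of ``productive'' steps in the hyperbolic sense, because each application of $R$ that is not eventually trapped must increase the Kobayashi density by a definite factor (this is the standard expansion estimate for the orbifold metric on the Fatou component). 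Combining, $d_{\Omega_1}(z_0,q)$ is bounded by $C_0$ plus a universal constant.

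The third step handles a component $\Omega_i\ne\Omega_1$. Since $\mathcal A(p)$ is the full basin, there is a smallest $m_i\ge 1$ with $R^{m_i}(\Omega_i)=\Omega_1$, and $R^{m_i}\colon\Omega_i\to\Omega_1$ is proper holomorphic of some finite degree $d_i$. Given $z_0\in\Omega_i$, apply Step 1–2 to $w_0:=R^{m_i}(z_0)\in\Omega_1$ to get $q_1\in\cup_k R^{-k}(p_0)$ with $d_{\Omega_1}(w_0,q_1)\le C_1$; then choose $q\in\Omega_i$ with $R^{m_i}(q)=q_1$, so $q\in\cup_k R^{-k}(p_0)$ as well. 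One has to bound $d_{\Omega_i}(z_0,q)$ in terms of $d_{\Omega_1}(w_0,q_1)$; lifting under a proper holomorphic (branched) covering can a priori inflate Kobayashi distance, so the argument is to connect $w_0$ to $q_1$ by a geodesic-ish path in $\Omega_1$, lift it to a path in $\Omega_i$ from $z_0$ to \emph{some} preimage of $q_1$, and bound the Kobayashi length of the lift — this is again controlled because the lifted length is at most the original length times a factor depending only on the branch structure over a neighborhood of the path, and the path can be kept in a fixed compact subset of $\Omega_1$ except near the net point $q_1\in\Delta'$, where the model is explicit. Here the non-simply-connected case enters only through the fact that $\Omega_i$, or $\Omega_1$ itself, may be hyperbolic with nontrivial topology, which is harmless: the Kobayashi metric is still complete and $R$ is still distance-nonincreasing. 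The constant $C$ produced this way depends on finitely many data ($R$, $p$, $p_0$, the finitely many ``first-return'' degrees $d_i$ and the geometry near $p$), but crucially \emph{not} on $i$ once one observes that only finitely many topological types of return map occur — which follows from $R$ having finite degree, so the $d_i$ are bounded and the relevant compact pieces can be taken uniformly. The main obstacle, as flagged, is the quantitative control of Kobayashi distance under the pull-back/lift in Steps 2 and 3; everything else is bookkeeping on top of the local linearization at $p$ and the argument already carried out for polynomials in \cite{RefH1}.
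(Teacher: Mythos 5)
Your three-step skeleton (localize near $p$, propagate along the forward orbit inside one component, transport to other components via the dynamics) matches the shape of the paper's argument, but the two steps where you flag an obstacle are exactly where the paper's content lies, and in both places your proposed resolution is either wrong or left as a heuristic that does not close the gap. The most concrete error is in Step~3: you assert that the Kobayashi length of a lift of a path under the proper branched map $R^{m_i}\colon\Omega_i\to\Omega_1$ is at most the original length times a factor ``depending only on the branch structure over a neighborhood of the path.'' This is false. Since $R^{m_i}$ is holomorphic, the Kobayashi metric is \emph{contracted}, so a lift has length $\ge$ the original, and the inflation factor blows up as the lift approaches a critical point (e.g.\ $z\mapsto z^2$ on $\D$ already shows unbounded inflation near $0$); it is also not controlled just by ``branch structure'' because the domain changes from $\Omega_1$ to $\Omega_i$. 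The paper does \emph{not} bound the lift by the pushforward; it instead splits $\Omega_2$ into a compact core $S_3$ (containing all critical points of $R$ in $\Omega_2$ and $R^{-1}$ of a compact core of $\Omega_1$) and the complement $D=\Omega_2\setminus S_3$, handles the cases where $z_0$ or the preimage lies in $S_3$ by compactness, and in the essential case ($z_0$ and the preimage both in $D$) appeals to Wold's asymptotics of invariant metrics in the normal direction (Theorem 3.4 of \cite{RefW}, following Case~4 of Theorem~B in \cite{RefH1}).

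The other gap is in Step~2 and in the non-simply-connected immediate basin case. Your ``orbifold covering / Kobayashi density increases by a definite factor'' sketch is vague and does not produce the needed reverse inequality. The paper's mechanism is an explicit annular decomposition of the basin: with $U_0$ a small disk about $p$ containing $p_0$, $U_n=R^{-n}(U_0)$, $W_n=U_n\setminus\overline{U}_{n-1}$, and $V_n=\mathcal{A}\setminus\overline{U}_n$, one fixes $n_0$ so that $U_{n_0}$ captures all critical points in $\mathcal{A}$; then $R\colon V_{N_0+1}\to V_{N_0}$ is an \emph{unbranched} covering for $N_0>n_0$, so lifting preserves $d_{V_{N_0+1}}$ and the inclusion $V_{N_0+1}\subset V_{N_0}$ yields the key one-step inequality $d_{V_{N_0}}(z',w')\le d_{V_{N_0}}(z,w)$, which iterates. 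Crucially, in the non-simply-connected case the $W_n$ have several connected components, and you must know that every component of every $W_n$ contains some $R^{-k}(p_0)$ so that the orbit of $z_0$ and the target $q$ can be joined inside a single component; this is the content of the paper's Lemma~\ref{lem1} (proved via the degree-$N$ cover structure $W_{n+1}\to W_n$), and your proposal never engages with it. Finally, as a minor point, your Step~1 ``net'' claim is misleading: for a non-superattracting $p$ the preimages of $p_0$ in the linearized model lie on a single logarithmic spiral and are not a Kobayashi net in the disk; what is actually used is simply that any fixed $\Delta'\Subset\Omega_1$ has bounded $d_{\Omega_1}$-diameter, which is trivial, so Step~1 carries no real weight.
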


		These theorems generalize the results in \cite{RefH1}
		which are for polynomials in $\mathbb C.$
		The key difference with rational functions is that the
		basins might not be simply connected.  This applies as well to the basin of infinity
		for a polynomial. It was observed in \cite{RefH1} that the theorem holds if the
		basin of attraction of $\infty$ of a polynomial is simply connected in $\mathbb P^1.$
		
		In the fifth section, we also discuss the corresponding result for entire functions in $\mathbb C.$
		There, we show that the theorem fails in general. Nevertheless, the theorem still holds
		after some mild conditions. (Finitely many critical points.)

		\section{The Kobayashi metric}\label{subsec2}
		First of all, let us introduce some definitions of the Kobayashi metric.
		
		\begin{defn}\label{def1}
			Let $\hat{\Omega}\subset\mathbb P^1$ be a domain. We choose a point $z\in \hat{\Omega}$ and a vector $\xi$ which is tangent to the plane at the point $z.$ Let $\triangle$ denote the unit disk in the complex plane.
			We define the {\em Kobayashi metric} \cite{RefK}
			$$
			F_{\hat{\Omega}}(z, \xi):=\inf\{\lambda>0 : \exists f: \triangle\stackrel{hol}{\longrightarrow} \hat{\Omega}, f(0)=z, \lambda f'(0)=\xi\}.
			$$

			Let $\gamma: [0, 1]\rightarrow \hat{\Omega}$ be a piecewise smooth curve.
			The {\em Kobayashi length} of $\gamma$ is defined to be 
			$$ L_{\hat{\Omega}} (\gamma)=\int_{\gamma} F_{\hat{\Omega}}(z, \xi) \lvert dz\rvert=\int_{0}^{1}F_{\hat{\Omega}}\big(\gamma(t), \gamma'(t)\big)\lvert \gamma'(t)\rvert dt.$$

			For any two points $z_1$ and $z_2$ in $\hat{\Omega}$, the {\em Kobayashi distance} between $z_1$ and $z_2$ is defined to be 
			$$d_{\hat{\Omega}}(z_1, z_2)=\inf\{L_{\hat{\Omega}} (\gamma): \gamma ~ \text{is a piecewise smooth curve connecting} ~z_1~ \text{and} ~z_2 \}.$$

			Note that $d_{\hat{\Omega}}(z_1, z_2)$ is defined where $z_1, z_2$ are in the same connected component of $\hat{\Omega}.$

			Let $d_E(z_1, z_2)$ denote the Euclidean metric distance for any two points $z_1, z_2\in\triangle.$ 
			
		\end{defn}

		We know that if $\hat{\Omega}=\triangle$, then the Kobayashi metric is the same as the Poincar\'{e} metric (see page 9 in \cite{RefF} and sections $0$ and $3$ in \cite{RefK}). And
		$$
		F_{\triangle}(z, \xi)=\frac{\lvert \xi\rvert}{1-\lvert z\rvert^2}.
		$$
		Note that it is common to call $F_{\hat{Q}}$ a "Poincar\'{e} metric" for any $\hat{Q}\subset \mathbb {C},$ but for generalization to higher dimensions, and also for the basin of $\infty$, we call it a "Kobayashi metric" even in dimension one to be more consistent.

		\begin{prop}[The distance decreasing property of the Kobayashi Metric \cite{RefK}]\label{pro1}
			Suppose $\Omega_1, \Omega_2$ are domains in $\mathbb P^1$, $z, \omega\in \Omega_1, \xi\in\mathbb C,$ and $f:\Omega_1\rightarrow\Omega_2$ is holomorphic. Then 
			$$F_{\Omega_2}(f(z
			), f'(z)\xi)\leq F_{\Omega_1}(z, \xi), ~~~d_{\Omega_2}(f(z), f(\omega))\leq d_{\Omega_1}(z,\omega).$$
		\end{prop}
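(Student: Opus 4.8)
The plan is to deduce the proposition directly from the definition of the Kobayashi metric together with the chain rule: its entire content is that post-composition with a holomorphic map carries admissible competitors to admissible competitors. I would organize it in three short steps, passing from the infinitesimal statement to the integrated one.

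\textbf{Step 1 (infinitesimal inequality).} Fix $z\in\Omega_1$ and $\xi\in\mathbb C$. Let $\lambda>0$ be any value admissible in the definition of $F_{\Omega_1}(z,\xi)$, i.e. there is a holomorphic $g:\triangle\to\Omega_1$ with $g(0)=z$ and $\lambda g'(0)=\xi$. Since $f:\Omega_1\to\Omega_2$ is holomorphic, $h:=f\circ g:\triangle\to\Omega_2$ is holomorphic, $h(0)=f(z)$, and by the chain rule $h'(0)=f'(z)\,g'(0)$, whence $\lambda h'(0)=f'(z)\bigl(\lambda g'(0)\bigr)=f'(z)\xi$. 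Thus $h$ is admissible in the definition of $F_{\Omega_2}\bigl(f(z),f'(z)\xi\bigr)$ with this same $\lambda$, so $F_{\Omega_2}\bigl(f(z),f'(z)\xi\bigr)\le\lambda$. Taking the infimum over all admissible $\lambda$ gives
$$F_{\Omega_2}\bigl(f(z),f'(z)\xi\bigr)\le F_{\Omega_1}(z,\xi).$$
(If there is no admissible $\lambda$ at all, then $F_{\Omega_1}(z,\xi)=+\infty$ and there is nothing to prove.)

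\textbf{Step 2 (length inequality).} Let $\gamma:[0,1]\to\Omega_1$ be piecewise smooth. Then $f\circ\gamma:[0,1]\to\Omega_2$ is piecewise smooth and $(f\circ\gamma)'(t)=f'(\gamma(t))\,\gamma'(t)$ wherever $\gamma$ is differentiable. Applying Step 1 at each such $t$ with $(z,\xi)=(\gamma(t),\gamma'(t))$ gives the pointwise bound
$$F_{\Omega_2}\bigl((f\circ\gamma)(t),(f\circ\gamma)'(t)\bigr)=F_{\Omega_2}\bigl(f(\gamma(t)),f'(\gamma(t))\gamma'(t)\bigr)\le F_{\Omega_1}\bigl(\gamma(t),\gamma'(t)\bigr),$$
and integrating this inequality along $\gamma$ (using that $F$ is positively homogeneous of degree one in its vector argument, so the arc-length weighting appearing in the definition of $L_{\hat{\Omega}}$ transforms correctly) yields $L_{\Omega_2}(f\circ\gamma)\le L_{\Omega_1}(\gamma)$.

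\textbf{Step 3 (distance inequality).} Assume $z,\omega$ lie in the same connected component of $\Omega_1$, so that $d_{\Omega_1}(z,\omega)$ is defined. For any piecewise smooth curve $\gamma$ in $\Omega_1$ from $z$ to $\omega$, the curve $f\circ\gamma$ joins $f(z)$ to $f(\omega)$ inside $\Omega_2$, so by the definition of the Kobayashi distance and Step 2,
$$d_{\Omega_2}\bigl(f(z),f(\omega)\bigr)\le L_{\Omega_2}(f\circ\gamma)\le L_{\Omega_1}(\gamma).$$
Taking the infimum over all such $\gamma$ gives $d_{\Omega_2}(f(z),f(\omega))\le d_{\Omega_1}(z,\omega)$.

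\textbf{Main difficulty.} There is none of substance: the statement is a formal consequence of the definitions and the chain rule, which is why it is quoted from \cite{RefK} rather than reproved. The only points deserving a line of care are (i) that one must argue with an arbitrary admissible $\lambda$ (resp. an arbitrary connecting curve) and only then pass to the infimum, since the infima defining $F_{\Omega_1}$ and $d_{\Omega_1}$ need not be attained, and (ii) that the transition from Step 1 to Step 2 should be carried out consistently with the homogeneity convention built into the definition of Kobayashi length. Degenerate cases ($f'(z)=0$, or $\Omega_1$ non-hyperbolic so that $F_{\Omega_1}\equiv 0$) need no separate treatment, since the composed competitor $f\circ g$ works verbatim.
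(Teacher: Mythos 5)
Your argument is correct and is the standard proof of this classical fact; the paper itself states the proposition without proof, citing \cite{RefK}, and the cited argument is exactly the one you give (push the disc competitors forward by $f$ via the chain rule, integrate, then infimize over curves). Your remark about the homogeneity convention in the definition of $L_{\hat{\Omega}}$ is the right point of care, since the paper's formula for the length integrand is written with an extra $\lvert\gamma'(t)\rvert$ factor that only makes sense under the degree-one homogeneity of $F$ in its vector argument.
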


		\begin{cor}\label{cor1}
			Suppose $\Omega_1\subseteq\Omega_2\subseteq\mathbb C.$ Then for any $z, \omega\in \Omega_1$ and $\xi\in\mathbb C,$ we have 
			$$F_{\Omega_2}(z, \xi)\leq F_{\Omega_1}(z, \xi), ~~~~d_{\Omega_2}(z, \omega)\leq d_{\Omega_1}(z,\omega).$$
		\end{cor}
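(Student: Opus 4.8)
The plan is to deduce Corollary \ref{cor1} directly from Proposition \ref{pro1} by applying the distance-decreasing property to the inclusion map. Let $\iota\colon\Omega_1\hookrightarrow\Omega_2$ be the inclusion. Since $\Omega_1\subseteq\Omega_2$, this is a well-defined holomorphic map, and at every point $z\in\Omega_1$ its derivative acts on tangent vectors as the identity, i.e. $\iota'(z)\xi=\xi$. The first inequality of Proposition \ref{pro1} with $f=\iota$ then gives $F_{\Omega_2}(z,\xi)=F_{\Omega_2}(\iota(z),\iota'(z)\xi)\leq F_{\Omega_1}(z,\xi)$, which is the first claim. For the second claim one first observes that $d_{\Omega_2}(z,\omega)$ is meaningful: if $z$ and $\omega$ lie in a common connected component of $\Omega_1$, then, that component being a connected subset of $\Omega_2$, they lie in a common connected component of $\Omega_2$ as well. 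The second inequality of Proposition \ref{pro1}, again with $f=\iota$, then yields $d_{\Omega_2}(z,\omega)=d_{\Omega_2}(\iota(z),\iota(\omega))\leq d_{\Omega_1}(z,\omega)$.

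Alternatively, and perhaps more transparently, both inequalities follow straight from the definitions in Definition \ref{def1}. For $F$: any holomorphic $f\colon\triangle\to\Omega_1$ with $f(0)=z$ and $\lambda f'(0)=\xi$ is also a holomorphic map $\triangle\to\Omega_2$ with the same normalization, so the set of admissible $\lambda$ in the definition of $F_{\Omega_2}(z,\xi)$ contains the corresponding set for $F_{\Omega_1}(z,\xi)$; an infimum over a larger set is no larger, giving $F_{\Omega_2}(z,\xi)\leq F_{\Omega_1}(z,\xi)$. Feeding this pointwise bound into the definition of Kobayashi length shows $L_{\Omega_2}(\gamma)\leq L_{\Omega_1}(\gamma)$ for every piecewise smooth curve $\gamma$ contained in $\Omega_1$; such curves are admissible competitors for $d_{\Omega_2}(z,\omega)$ (with possibly further competitors lying partly in $\Omega_2\setminus\Omega_1$, which can only lower the infimum), so taking infima gives $d_{\Omega_2}(z,\omega)\leq d_{\Omega_1}(z,\omega)$.

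There is no genuine obstacle here: the corollary is a formal consequence of Proposition \ref{pro1} (equivalently, of the definitions). The only two points deserving a word of care are that the inclusion is honestly holomorphic with identity derivative — immediate — and that the Kobayashi distance on the larger domain $\Omega_2$ is defined for the pair $z,\omega$, i.e. that enlarging the domain does not place the two points in different components; this holds because the inclusion preserves connectedness.
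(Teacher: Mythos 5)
Your proof is correct and follows the intended route: the paper states this as an immediate corollary of Proposition \ref{pro1}, obtained by applying the distance-decreasing property to the inclusion map $\Omega_1\hookrightarrow\Omega_2$, exactly as you do. Your additional check that $z,\omega$ remain in a common component of $\Omega_2$ and the alternative argument from the definitions are both fine but not needed beyond what the paper assumes.
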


		\section{Dynamics of polynomials inside attracting basins in $\mathbb{P}^1$}\label{sec3}
		
		In the paper \cite{RefH1}, for a polynomial $f$ in $\mathbb{C}$ or if the basin of attraction $\mathcal{A}(\infty)$  of $f$ is simply connected, Hu proved the following theorem and corollary: 
		
		\begin{thm}\label{the3}
			Suppose $f(z)$ is a polynomial of degree $N\geq 2$ on $\mathbb{C}$, $p$ is an attracting fixed point of $f(z),$ $\Omega_1$ is the immediate basin of attraction of $p$, $\{f^{-1}(p)\}\cap \Omega_1\neq\{p\}$,
			$\mathcal{A}(p)$ is the basin of attraction of $p$, $\Omega_i $ $(i=1, 2, \cdots)$ are the connected components of $\mathcal{A}(p)$. Then there is a constant $\tilde{C}$ so that for every point $z_0$ inside any $\Omega_i$, there exists a point $q\in \cup_k f^{-k}(p)$ inside $\Omega_i$ such that $d_{\Omega_i}(z_0, q)\leq \tilde{C}$, where $d_{\Omega_i}$ is the Kobayashi distance on $\Omega_i.$  
		\end{thm}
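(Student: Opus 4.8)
\noindent\emph{Proof strategy.} Normalise $p=0$. Since $\mathcal A(\infty)\neq\emptyset$ is disjoint from $\mathcal A(p)$, each $\Omega_i$ is a bounded, hence hyperbolic, plane domain, and $f|_{\Omega_1}\colon\Omega_1\to\Omega_1$ is proper of some degree $d_1$. The hypothesis $\{f^{-1}(p)\}\cap\Omega_1\neq\{p\}$ gives a point $p'\in\Omega_1$ with $f(p')=p$, $p'\neq p$; as every $f$-preimage of $p$ in $\Omega_1$ is among the $d_1$ preimages of $p$ counted with multiplicity, this forces $d_1\geq2$ (equivalently, $f|_{\Omega_1}$ is not an automorphism, since an automorphism of a hyperbolic surface has no fixed point of multiplier of modulus $<1$). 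Riemann--Hurwitz then forces $\Omega_1$ to be simply connected, so after a Riemann map we may assume $\Omega_1=\triangle$, $p=0$, and $f|_{\Omega_1}$ a finite Blaschke product of degree $d_1$ fixing $0$, with Julia set the circle. Set $\lambda=f'(p)$. When $\lambda\neq0$, fix a Koenigs coordinate $\phi$ near $p$ with $\phi(p)=0$, $\phi\circ f=\lambda\phi$, and write $U_r=\phi^{-1}(\{|w|<r\})$; then each $U_r$ is $f$-forward invariant, $f(U_r)=U_{|\lambda|r}$, and $\bigcup_k f^{-k}(p)\cap U_r=\{p\}$. When $\lambda=0$ use B\"ottcher's coordinate; the rest is unchanged. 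The argument has two parts: the immediate basin $\Omega_1$, and the reduction of a general $\Omega_i$ to $\Omega_1$.

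For $\Omega_1$: choose $\rho_0$ with $p'\in B_{d_{\Omega_1}}(p,\rho_0)$ and set $K=\overline{B_{d_{\Omega_1}}(p,\rho_0)}$, a compact subset of $\Omega_1$ with $f(K)\subseteq K$ (distance decreasing). Since every orbit in $\Omega_1$ tends to $p\in\operatorname{int}K$, the compacta $E_n:=(f|_{\Omega_1})^{-n}(K)$ increase to $\Omega_1$; put $L_0=K$, $L_n=E_n\setminus E_{n-1}$, so $\Omega_1=\bigsqcup_{n\geq0}L_n$, $(f^{\,n-1})^{-1}(L_1)\cap\Omega_1=L_n$, and $f\colon L_n\to L_{n-1}$ is proper of degree $d_1$. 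A connected component $T$ of $L_n$ (``a tile of level $n$'') is mapped by $f^{\,n-1}$ properly onto a component $T_1$ of $L_1$. Since $f$ has only finitely many critical points, all tiles except finitely many — and all of level $>n_0$ for a fixed $n_0$ — map by the corresponding iterate \emph{biholomorphically} onto their $T_1$. Each component of $L_1$ meets $\bigcup_k f^{-k}(p)$ (the collar around $K$ contains $p'$; into each remaining component one pushes a preimage of $p$ or of $p'$ one step), so pulling back through these biholomorphisms marks a point $q_T\in T\cap\bigcup_k f^{-k}(p)$ in every non-exceptional tile.

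The heart is a \emph{uniform estimate}: one $C_0$ works for all tiles at once, i.e.\ every $z_0\in\Omega_1$ lies within $d_{\Omega_1}$-distance $C_0$ of $Q:=\bigcup_k f^{-k}(p)\cap\Omega_1$. One induces on the level $n$ of $z_0$. The base ($z_0\in K$) is immediate. In the step, if $z_0$ is $d_{\Omega_1}$-close to the inner boundary $\partial E_{n-1}$ — which lies well inside $\Omega_1$, so $d_{\Omega_1}$ is controlled there — replace $z_0$ by a nearby point of a lower level and use the inductive hypothesis; otherwise $z_0$ is deep inside a tile $T$, and one must see that the marked points in $T$ form a $d_{\Omega_1}$-net of mesh bounded independently of $n$. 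Here $d_1\geq2$ enters: passing from level $n-1$ to level $n$ the marked points multiply by (essentially) $d_1$, while $\operatorname{area}_{d_{\Omega_1}}(L_n)\geq d_1\operatorname{area}_{d_{\Omega_1}}(L_{n-1})$ — because $f\colon L_n\to L_{n-1}$ has degree $d_1$ and $f$ is distance decreasing — so the two growth rates match; Montel's theorem gives that $Q$ accumulates at every boundary point of $\Omega_1$; and equidistribution of the preimages $f^{-n}(p)$ (Brolin--Lyubich type, transported through the biholomorphisms $f^{\,n-1}|_T$, and visible concretely in the Blaschke model on the circle) turns this into the desired uniform mesh. Together with the finitely many exceptional tiles and levels this produces the constant $\tilde C_1$ for $\Omega_1$.

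For a general $\Omega_i$ ($i\geq2$): let $m$ be least with $w:=f^m(z_0)\in\Omega_1$; then $f^m$ maps $\Omega_i$ properly onto $\Omega_1$. Pick $q'\in Q$ with $d_{\Omega_1}(w,q')\leq\tilde C_1$. If the component of $(f^m)^{-1}\!\big(B_{d_{\Omega_1}}(w,\tilde C_1+1)\big)$ through $z_0$ maps isomorphically onto this ball — which fails only if it contains one of the finitely many critical points of $f$ — it is an isometric copy of the ball, and its point over $q'$ is a $q\in\Omega_i\cap\bigcup_k f^{-k}(p)$ with $d_{\Omega_i}(z_0,q)\leq\tilde C_1$; when it is branched, detour around the critical points at the price of a bounded enlargement, giving the universal constant $\tilde C$. \emph{The main obstacle is the uniform estimate above}: reconciling the Kobayashi geometry of the tiles $L_n$ with the distribution of $\bigcup_k f^{-k}(p)$ uniformly in the level, and the bookkeeping of the finitely many branched (``exceptional'') tiles; the remaining steps are soft.
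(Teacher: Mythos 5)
The paper itself does not prove Theorem~\ref{the3}: it quotes it verbatim from Hu's earlier paper \cite{RefH1}. The closest analogue proved \emph{in this paper} is Theorem~\ref{them1}, which treats $\mathcal A(\infty)$, so the comparison below is against that argument and the one it references. Your overall scaffolding — Riemann map on $\Omega_1$, the exhaustion $E_n=(f|_{\Omega_1})^{-n}(K)$ with tiles $L_n=E_n\setminus E_{n-1}$, the observation that beyond a fixed level $n_0$ the iterate $f^{\,n-1}$ is a biholomorphism on each tile, and the reduction of $\Omega_i$ to $\Omega_1$ by pulling a bounded ball back through $f^m$ — is consistent with the shape of the known proof. (The appeal to Riemann--Hurwitz to get simple connectivity of $\Omega_1$ is unnecessary: for a polynomial every bounded Fatou component is simply connected by the maximum principle, as the paper itself uses in Section~5; this is harmless but worth noting.)

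The genuine gap is exactly where you flag it: the ``uniform estimate.'' You propose to get a uniform $d_{\Omega_1}$-net from a match of growth rates — $\#(\text{marked points at level }n)\sim d_1^n$ versus $\operatorname{area}_{d_{\Omega_1}}(L_n)\gtrsim d_1^n$ — together with Montel and Brolin--Lyubich equidistribution. This does not close the argument: equality of average densities does not preclude the preimages from clustering, and weak-$*$ equidistribution of $f^{-n}(p)$ towards the equilibrium measure gives no metric control on gaps in the Kobayashi geometry. The tool that actually works, and which the paper uses in the proof of Theorem~\ref{them1}, is the covering/non-increasing property of the Kobayashi metric: fix $n_0$ with all critical points of $f|_{\Omega_1}$ in $E_{n_0}$, set $V:=\Omega_1\setminus\overline{E_{n_0}}$; then $f\colon \Omega_1\setminus\overline{E_{n_0+1}}\to V$ is an unbranched cover, so lifting curves gives
$$d_{V}\bigl(f^{-1}(z),f^{-1}(w)\bigr)\le d_{\Omega_1\setminus\overline{E_{n_0+1}}}\bigl(f^{-1}(z),f^{-1}(w)\bigr)= d_{V}(z,w)$$
for the appropriately chosen preimages. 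Iterating this single inequality transports $z_0$ and a marked preimage of $p$ into a fixed compact collar $L_1$ (or $L_2$) at uniformly bounded $d_V$-distance, hence uniformly bounded $d_{\Omega_1}$-distance, with no need for equidistribution at all. Without this inequality your Step~2 is incomplete; with it, the Montel/equidistribution paragraph can be deleted. Likewise the ``detour around the critical points at the price of a bounded enlargement'' in the $\Omega_i$-step is too vague to stand alone; the paper handles the finitely many branched components by an explicit compact-exhaustion case analysis (the sets $S_1,\dots,S_5$ in the proof of Theorem~A, citing Wold \cite{RefW} for the covering case), and something of comparable precision is needed here.
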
 
		\begin{cor}\label{cor2}
			Let $f$ be a polynomial of degree $m\geq2$, considered as a map on $\hat{\mathbb{C}}.$ Suppose that $\Omega(\infty) $ is the basin of $\infty$ and is simply connected. If $p\in\Omega\setminus\{\infty\},$ then there exists a constant $C_0>0$ such that for every point $z_0\in\Omega$, there exists $q\in \cup_k g^{-k}(p), k\geq0$ satisfying $d_\Omega(z_0,q)\leq C_0,$ where $d_\Omega$ is the Kobayashi distance on $\Omega.$
		\end{cor}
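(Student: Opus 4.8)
\emph{Proof strategy.} The plan is to transport the problem to the model map $u\mapsto u^{m}$ on the unit disk $\D$ via the Böttcher coordinate, and then run an explicit estimate in the hyperbolic metric. Since $\infty$ is a superattracting fixed point of the degree-$m$ polynomial $f$ and $\Omega=\Omega(\infty)$ is simply connected, the Böttcher coordinate extends to a conformal isomorphism $\varphi\colon\Omega\to\{|w|>1\}\cup\{\infty\}$ with $\varphi(f(z))=\varphi(z)^{m}$; composing with $w\mapsto1/w$ gives a conformal isomorphism $\psi\colon\Omega\to\D$ with $\psi(f(z))=\psi(z)^{m}$, so that $f|_\Omega$ is conjugate via $\psi$ to $h(u)=u^{m}$ on $\D$. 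Since $\Omega(\infty)$ is completely invariant, $\cup_{k\ge0}f^{-k}(p)\subset\Omega$, and $\psi$ carries this set bijectively onto $\cup_{k\ge0}h^{-k}(a)$ with $a:=\psi(p)\in\D\setminus\{0\}$. Because the Kobayashi distance is a conformal invariant (Proposition~\ref{pro1}, applied to $\psi$ and to $\psi^{-1}$), it suffices to prove that $\cup_{k\ge0}h^{-k}(a)$ is $C_0$-dense in $(\D,d_\D)$ for some $C_0=C_0(m,|a|)$.

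For $a\ne0$ one has $h^{-k}(a)=\{u:u^{m^{k}}=a\}$, the $m^{k}$ points equally spaced on the circle $|u|=\rho_k:=|a|^{1/m^{k}}$. Put $s:=\log(1/|a|)>0$ and $t:=s/m^{k}$, so $\rho_k=e^{-t}$. Using $F_\D(u,\xi)=|\xi|/(1-|u|^2)$, the Kobayashi length of that circle is $2\pi\rho_k/(1-\rho_k^{2})$, hence the Kobayashi distance between two adjacent points of $h^{-k}(a)$ is at most
\[
\frac{1}{m^{k}}\cdot\frac{2\pi\rho_k}{1-\rho_k^{2}}=\frac{2\pi}{s}\cdot\frac{t}{2\sinh t}\le\frac{\pi}{s},
\]
the last inequality being the elementary $\sinh t\ge t$; note the bound is uniform in $k$. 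Likewise $d_\D(\rho_k,\rho_{k+1})=\tfrac12\log\tfrac{1+\rho_{k+1}}{1-\rho_{k+1}}-\tfrac12\log\tfrac{1+\rho_k}{1-\rho_k}\le\tfrac12\log(2m)$ uniformly in $k$, using $\tfrac{1-\rho_k}{1-\rho_{k+1}}=\tfrac{1-v^{m}}{1-v}<m$ with $v=e^{-t/m}$. So the backward orbit of $a$ forms a ``net'' whose mesh, in the hyperbolic metric, is bounded independently of $k$.

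Now fix $u_0\in\D$, $|u_0|=r$. If $r\le|a|$, then $a\in\cup_k h^{-k}(a)$ and $d_\D(u_0,a)\le d_\D(u_0,0)+d_\D(0,a)\le\log\tfrac{1+|a|}{1-|a|}$. If $|a|<r<1$, then since $\rho_k\uparrow1$ there is a unique $k\ge0$ with $\rho_k\le r<\rho_{k+1}$; the radial projection $v_0:=\rho_k\,u_0/|u_0|$ satisfies $d_\D(u_0,v_0)=d_\D(\rho_k,r)\le d_\D(\rho_k,\rho_{k+1})\le\tfrac12\log(2m)$, and the point $q$ of $h^{-k}(a)$ nearest to $v_0$ along the circle $|u|=\rho_k$ satisfies $d_\D(v_0,q)\le\pi/(2s)$ by the arc-length bound above. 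Hence every $u_0$ lies within $C_0:=\max\{\log\tfrac{1+|a|}{1-|a|},\ \tfrac12\log(2m)+\pi/(2s)\}$ of $\cup_k h^{-k}(a)$, and pulling back by $\psi^{-1}$ completes the proof.

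The only step requiring care is the uniform-in-$k$ mesh estimate: it works precisely because the number $m^{k}$ of $k$th-level preimages of $a$ grows at exactly the rate needed to offset the exponential blow-up of the hyperbolic length of the circles $|u|=\rho_k$ as $\rho_k\to1$, which is what the inequality $t/\sinh t\le1$ encodes. The remaining ingredients (the Böttcher conjugacy, complete invariance of $\Omega(\infty)$, conformal invariance of the Kobayashi distance) are standard. One could alternatively prove the statement for one convenient choice of $p$ and then observe that in the model coordinate the backward orbits of any two points of $\D\setminus\{0\}$ stay within bounded Hausdorff distance of one another; but the direct argument above is just as short.
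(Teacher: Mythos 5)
Your proof is correct. It also differs in method from the paper's own treatment: the paper states Corollary~\ref{cor2} as a citation of \cite{RefH1} and then, in Theorem~\ref{them1}, proves the strictly more general statement (without simple connectivity) by a qualitative covering argument with the annuli $W_{n+1}=U_{n+1}\setminus\overline U_n$, a lemma placing a preimage of $p$ in every component of every $W_n$, lifting of curves, and the monotonicity inequality $d_{V_{N_0}}(z',w')\le d_{V_{N_0}}(z,w)$. You instead exploit the simple connectivity head-on: the Böttcher coordinate then extends to a global conjugacy $\psi\colon\Omega\to\D$ with $\psi\circ f\circ\psi^{-1}(u)=u^{m}$, and the whole statement reduces to showing $\bigcup_k h^{-k}(a)$ is coarsely dense in $(\D,d_\D)$ for $h(u)=u^m$, which you carry out by hand using the paper's normalization $F_\D(u,\xi)=|\xi|/(1-|u|^2)$. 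Your two mesh estimates are exact where they need to be: the angular spacing bound $\tfrac{1}{m^k}\cdot\tfrac{2\pi\rho_k}{1-\rho_k^2}=\tfrac{\pi t}{s\sinh t}\le \pi/s$ with $t=s/m^k$, $s=\log(1/|a|)$, and the radial spacing bound $d_\D(\rho_k,\rho_{k+1})\le\tfrac12\log(2m)$ from $\tfrac{1-v^m}{1-v}<m$ and $\tfrac{1+\rho_{k+1}}{1+\rho_k}<2$; the triangle inequality then gives the explicit constant $C_0=\max\bigl\{\log\tfrac{1+|a|}{1-|a|},\ \tfrac12\log(2m)+\pi/(2s)\bigr\}$. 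What your route buys is a short, self-contained argument with a fully explicit constant depending only on $m$ and $|\psi(p)|$; what it costs is flexibility, since the global Böttcher conjugacy to $u\mapsto u^m$ is exactly the feature that is lost in the non--simply-connected setting, where the paper's annulus/covering method is still available. (One small reading note: at $k=0$ the ``adjacent-points'' spacing degenerates to the full circle, but your bound $\pi/(2s)$ still covers the distance from any point of $|u|=\rho_0$ to $a$, since $\sinh s\ge s$, so the argument is uniform in $k\ge0$ as claimed.)
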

		
		It is very natural to ask, when the basin of infinity is not simply connected, whether we can still obtain analogous results. 
		To answer it, the following theorem comes into play:

		\begin{thm}\label{them1}
			Let $f$ be a polynomial in $\mathbb{P}^1$ and $\mathcal{A}(\infty)$ be the basin of attraction of infinity. Let $p\in\mathcal{A}(\infty)$ close to $\infty$, $p \neq \infty.$
			Then there exists a constant $C$ so that for any $z_0\in \mathcal{A}(\infty)$, there
			is a point $q\in \cup_k f^{-k}(p), k\geq0$ so that the Kobayashi distance
			$d_{\mathcal{A}(\infty)}(z_0, q)\leq C.$
		\end{thm}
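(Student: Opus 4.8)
\medskip
\noindent\textbf{Proof strategy.}
Let $N=\deg f$ and let $G$ be the Green's function of $\mathcal A(\infty)$ with pole at $\infty$, normalized so $G\circ f=N\cdot G$; thus $G>0$ on $\mathcal A(\infty)$, $G\equiv 0$ on $\mathbb P^1\setminus\mathcal A(\infty)$, $G(z)\to\infty$ as $z\to\infty$, and $\{G>c\}\cup\{\infty\}$ is connected for every $c>0$ (maximum principle), so $\mathcal A(\infty)$ is connected. Let $\lambda_0$ be the largest value of $G$ at a finite critical point of $f$ lying in $\mathcal A(\infty)$; if there is none then $\mathcal A(\infty)$ is simply connected in $\mathbb P^1$ and the conclusion is Corollary~\ref{cor2}, so assume $\lambda_0>0$. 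Then $\Omega_0:=\{G>\lambda_0\}\cup\{\infty\}$ is a topological disk on which the B\"ottcher coordinate $\phi$ conjugates $f$ to $w\mapsto w^{N}$ and identifies $\Omega_0$ with $\{|w|>e^{\lambda_0}\}\cup\{\infty\}$. I read ``$p$ close to $\infty$'' as $\lambda_1:=G(p)\ge N^{K}\lambda_0$ for a suitable constant $K=K(f)$, and I decompose $\mathcal A(\infty)$ into the dynamically defined tiles
\[
\mathcal F_m\;:=\;f^{-m}\bigl(\{\lambda_1\le G\le N\lambda_1\}\bigr)\;=\;\{\lambda_1/N^{m}\le G\le\lambda_1/N^{m-1}\}\qquad(m\ge 0),
\]
together with $\{G>N\lambda_1\}\cup\{\infty\}$. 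If $G(z_0)\ge\lambda_1$, then in the coordinate $u=e^{\lambda_0}/w$ on $\Omega_0$ both $z_0$ and $p$ lie in the disk $\{|u|<e^{\lambda_0-\lambda_1}\}$, so $d_{\Omega_0}(z_0,p)\le 4e^{\lambda_0-\lambda_1}$ and, by Proposition~\ref{pro1}, $q=p$ already works; hence I may assume $z_0\in\mathcal F_m$ with $m\ge 1$.

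\medskip
\noindent\textbf{Reduction by one high iterate.}
Fix the round annular neighborhood $\mathcal T_0:=\{\lambda_1/N^{K}<G<N^{K}\lambda_1\}$ of $\mathcal F_0$; since $\lambda_1\ge N^{K}\lambda_0$ we have $\mathcal T_0\subset\Omega_0$, so via $\phi$ it is a genuine round annulus in which $\mathcal F_0=\{\lambda_1\le G\le N\lambda_1\}$ occupies only a bounded number of fundamental annuli. A direct computation with the explicit hyperbolic metric of a round annulus then gives $\operatorname{diam}_{\mathcal T_0}(\mathcal F_0)\le C_1$ with $C_1$ depending only on $N$ --- crucially not on $\lambda_1$. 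Let $\mathcal T_m:=f^{-m}(\mathcal T_0)\subset\mathcal A(\infty)$, so $f^{m}:\mathcal T_m\to\mathcal T_0$ is a proper branched covering of degree $N^{m}$ whose branch values lie in the fixed finite set $\bigl(\bigcup_{l}\{f^{s}(v_{l}):s\ge 1\}\bigr)\cap\mathcal T_0$, where $v_1,\dots,v_{k_0}$ are the finite critical points of $f$ in $\mathcal A(\infty)$; this set has cardinality bounded independently of $m$, because $\mathcal T_0$ again spans only finitely many fundamental annuli. Now put $\zeta:=f^{m}(z_0)$, which lies in $\mathcal F_0$ along with $p$, join $\zeta$ to $p$ inside $\mathcal T_0$ by a near-geodesic $\gamma$ with $L_{\mathcal T_0}(\gamma)\le C_1+1$, and lift $\gamma$ through $f^{m}$ starting at $z_0$ to a path $\tilde\gamma\subset\mathcal T_m$ ending at a point $q\in f^{-m}(p)\subset\bigcup_k f^{-k}(p)$. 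By Proposition~\ref{pro1} applied to the inclusion $\mathcal T_m\hookrightarrow\mathcal A(\infty)$ we obtain $d_{\mathcal A(\infty)}(z_0,q)\le d_{\mathcal T_m}(z_0,q)\le L_{\mathcal T_m}(\tilde\gamma)$, so the theorem reduces to bounding the Kobayashi length of the lift $\tilde\gamma$ uniformly in $m$ and $z_0$.

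\medskip
\noindent\textbf{The main obstacle: the lift near the branch points.}
Over $\mathcal T_0\setminus B_m$ (with $B_m$ the branch locus of $f^m$) the map $f^m$ is an unramified covering, hence a local isometry for the Kobayashi metrics of $\mathcal T_m\setminus(f^m)^{-1}(B_m)$ and $\mathcal T_0\setminus B_m$; so wherever $\gamma$ keeps a definite distance from $B_m$ the lift $\tilde\gamma$ has essentially the same length as the corresponding piece of $\gamma$, and the real difficulty is the behavior near $B_m$. The plan there is: (i) being a short near-geodesic issuing from $\mathcal F_0$, $\gamma$ stays in a compact set $\mathcal F_0^{+}\Subset\mathcal T_0$ meeting only $O_f(1)$ of the points of $B_m$; (ii) near each such branch value one fixes, once and for all, a small coordinate disk on which $f^m$ is the model map $w\mapsto w^{d}$, with local degree $d$ bounded uniformly in $m$ --- here one uses that the forward orbit of each $v_l$ escapes to $\infty$ and so meets $\operatorname{Crit}(f)$ only finitely often, whence the ramification indices of every iterate $f^m$ are bounded by a constant depending only on $f$; (iii) for the model map $w\mapsto w^{d}$ the lift of an arc of bounded Euclidean length again has bounded Euclidean length (the elementary bound $\int|\zeta|^{1/d-1}\,d|\zeta|<\infty$), and this converts to a Kobayashi-length bound in $\mathcal T_m$ because the generous collar built into $\mathcal T_0$ keeps the relevant critical points of $f^m$ in the interior of $\mathcal T_m$, away from $\partial\mathcal T_m$. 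Summing the $O_f(1)$ local contributions with the covering part yields $L_{\mathcal T_m}(\tilde\gamma)\le C$ with $C$ depending only on $f$ and $p$ (a mild, say logarithmic, dependence of $C$ on $G(p)$ would be harmless, as the theorem permits $C$ to depend on $p$). One should also record the borderline case in which $p$ itself lies on the forward orbit of a critical point, so that $f^m$ is ramified over $p$ and $q=\tilde\gamma(1)$ is a critical point of $f^m$; this affects neither the path-lifting nor the length estimate, and the argument goes through verbatim. The genuinely technical step --- the heart of the matter --- is (ii)--(iii): making the local behavior of the iterates $f^m$ near their critical points uniform in $m$ and converting it into a bound for the Kobayashi length of the lift; the B\"ottcher normalization, the annulus diameter estimate, and the counting of branch values are routine.
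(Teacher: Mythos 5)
Your approach is genuinely different from the paper's, and it has a real gap precisely at the step you flag as ``the heart of the matter.''

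The paper's proof sets $U_n=f^{-n}(U_0)$ for a small B\"ottcher disk $U_0\ni\infty$, picks $n_0$ so that $U_{n_0}$ contains \emph{all} critical points of $f$ lying in $\mathcal A(\infty)$, and then works with $V_n=\mathcal A(\infty)\setminus\overline U_n$ for $n>n_0$. The crucial point is that $f:V_{n+1}\to V_n$ is an \emph{unbranched} covering: one lifts a single step at a time, using the exact equality of Kobayashi distances under an unramified covering (choose the right preimage $w'$ of $w$ so that $d_{V_{n+1}}(z',w')=d_{V_n}(z,w)$), then the inclusion $V_{n+1}\subset V_n$ to contract. Iterating this one-step contraction reduces everything to a fixed compact ``first tile.'' Branch points never enter. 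A separate topological lemma (Lemma~\ref{lem1}) guarantees preimages of $p$ in every component of every $W_n$, so that $q$ and the orbit of $z_0$ live in the same component --- a point your argument handles automatically by lifting the path $\gamma$ starting at $z_0$, which is a genuine simplification on the topological side.

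However, by lifting $\gamma$ through the \emph{branched} cover $f^m:\mathcal T_m\to\mathcal T_0$ in one shot, you trade that simplification for the branch-point problem, and your steps (ii)--(iii) do not resolve it. A proper map $f^m$ satisfies $F_{\mathcal T_0}(f^m(z),(f^m)'(z)\xi)\le F_{\mathcal T_m}(z,\xi)$, which bounds the lifted length from \emph{below}, not above; the local-isometry identity you invoke holds only between the punctured domains $\mathcal T_m\setminus(f^m)^{-1}(B_m)$ and $\mathcal T_0\setminus B_m$, whose Kobayashi metrics blow up (like $1/(r\log(1/r))$) near the removed points, so ``essentially the same length as the corresponding piece of $\gamma$'' is false for the part of $\gamma$ that approaches $B_m$. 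Your fallback --- modeling $f^m$ as $w\mapsto w^d$ on ``a small coordinate disk fixed once and for all'' and converting the Euclidean bound $\int|\zeta|^{1/d-1}d|\zeta|<\infty$ to a Kobayashi bound --- does not close the gap: the size of that coordinate disk and the leading Taylor coefficient of $f^m$ at the critical point both depend on $m$, and the needed inequality is effectively ``$F_{\mathcal T_m}$ at the critical point, measured against the model coordinate, is bounded uniformly in $m$,'' which is an assertion about the geometry of $\mathcal T_m$ near $\partial\mathcal T_m$ that you assert (``the generous collar \ldots keeps the critical points away from $\partial\mathcal T_m$'') but do not prove, and which in the Kobayashi metric is not obviously true. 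The one claim in (ii) that is correct is the uniform bound on ramification indices of $f^m$ (orbits escape, so the product of local degrees stabilizes). As it stands, the argument is a plausible plan with the decisive estimate missing; a cleaner route within your framework would be to perturb $\gamma$ to keep a fixed Kobayashi distance from the $O_f(1)$ points of $B_m\cap\mathcal F_0^+$ (possible with bounded extra length), then lift through the \emph{unramified} cover of the punctured domains and relax to $\mathcal T_m$ --- but that, too, is closer in spirit to what the paper does by excising the critical region outright.
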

		
		\begin{proof}
			Let $U_0$ be a small neighborhood of $\infty$ containing $p.$
			We let in fact $U_0$ be a disc in B\"{o}ttcher coordinates centered at $\infty.$
			Let $U_n=f^{-n}(U_0).$
			There exists an $n_0$ so that $U_{n_0}$ contains all critical points inside $\mathcal{A}(\infty)$.
			Set $W_{n+1}=U_{n+1}\setminus \overline{U}_n.$
			
			Note that we need to be careful about the topology. The sets $W_{n+1} $ might have many connected components. We need to make sure that the orbit
			of $z_0$ lands in the same connected component as one of the $f^{-k}(p).$ Hence, we need the following lemma.
			\begin{lem}\label{lem1}
				The sets $W_{n}$ have finitely many connected components $W_{n,i}.$ There is at least one preimage of $p$ in each connected component of each $W_n.$
				
			\end{lem}
			\begin{proof}
				Let $U_0$ be a small neighborhood of $\infty$ containing $p$ and $U_n=f^{-n}(U_0).$ Using B\"{o}ttcher coordinates we know that $f$ is equivalent to 
				$z^N$ ( $N$ is actually the degree of $f$) near infinity. So we can choose $U_0$ to be a small disc in the B\"{o}ttcher coordinates.
				We then get that $U_1= f^{-1}(U_0)$ is also such a disc with larger radius and
				$f$ is an unbranched cover from $U_1$ to $U_0$ of order $N$.
				Inductively $U_{n}=f^{-n}(U_0)$ is a branched or unbranched cover of order $N$ of $U_{n-1}$. 
				Then, all the $U_n$ will be connected. 
				
				Next, we need to know whether $W_n$ is connected. This is true when $n$ is small, say $n\leq n'$.
				But then $W_{n'+1}$ will have at most $N$ different components, 
				each of which is a cover of $W_{n'}$.
				Similarly, $W_{n}$ will have finitely many different components for all larger $n>n'+1.$

				The map $f:W_{n+1}\rightarrow W_n$ is a branched or unbranched cover of order $N.$ 
				Each $W_n$ is a finite union of connected components $W_{n, i}.$
				For each $W_{n, i}$ there are finitely many connected components of
				$W_{n+1}$ which are branched or unbranched covers of $W_{n, i}.$
				
				Suppose that for each component $W_{n, i}$ of $W_n,$ 
				there is a preimage of the form $f^{-m}(p)\in W_{n, i}.$ 
				Pick any connected component $W_{n+1, j}$ of $W_{n+1}.$
				Then this is a branched cover of some $W_{n, i}$. 
				Hence there exists a preimage $f^{-m-1}(p)$ in this component $W_{n+1, j}$. By induction, there is at least one preimage of $p$ in each connected component of each $W_n.$ Using continuity of the Kobayashi distance, we can also also perturb $z_0$
				a little so that its orbit never intersects any $\partial U_n.$

			\end{proof}
			This lemma implies that when $f^n(z_0)$ is in some connected component $W_{m, i}$ of $W_{m},$ there always exists a preimage $q\in  \cup_k f^{-k}(p), k\geq0 $ in the same component $W_{m, i}.$
			Since we need to calculate the Kobayashi distance between $q$ and $f^n(z_0)$ on the same component $W_{m, i}$, we need to avoid any preimages $q$ landing on the boundary of any components of $W_{m, i}$. Thus, we add the extra assumption that there is no orbit of a critical point containing a point on the boundary of $U_0,$ and there is no preimage of $p$ on the boundary of $U_0.$ Then the boundary of each $U_n$ consists of smooth curves.
			
			We can then also find a curve $\gamma_n$ in the same component
			which connects $q$ to $f^n(z_0).$
			By induction we can lift $\gamma_n$ to $\gamma_k$ connecting $f^k(z_0)$
			to some preimage $f^{-\ell}(p).$ This whole curve will then be inside
			the same component of $W_{n-k, i.}$
			
			Let $V_n=\mathcal{A}(\infty) \setminus \overline{U}_n.$	Then the map $f: V_{N_0+1}\rightarrow V_{N_0}, N_0> n_0$ is an unbranched cover. 
			Thus, for any two points $z,w$ in $V_{N_0},$ 
			we can find preimages $z'=f^{-1}(z), w'=f^{-1}(w)$ in $V_{N_0+1}$
			so that 
			$$d_{V_{N_0+1}}(z', w')= d_{V_{N_0}}(z, w).$$

			By Corollary \ref{cor1}, we know
			
			$$d_{V_{N_0}}(z', w')\leq d_{V_{N_0+1}}(z', w').$$
			
			Hence, 
			\begin{equation}\label{eq1}
				d_{V_{N_0}}(z', w')\leq d_{V_{N_0}}(z, w).
			\end{equation}
			
			Next, we need to consider $z_0$ inside $\mathcal{A}(\infty)$ or $z_0$ is very close to the boundary of $\mathcal{A}(\infty).$

			If $ z_0$ is far away from $\infty$ and close to $\partial\mathcal{A}(\infty)$, we know that $f^{N_1}(z_0)\in W_{1}$ for some integer $N_1.$ 
			We can then choose some $q':=f^{-M_1}(p)\subseteq W_{1}$, $M_1$ is a positive integer. 
			
			Then we let $\gamma'_1\in W_1$ be a curve connecting $f^{N_1}(z_0)$ and $q'.$ We lift $\gamma'_1$ to $\gamma'_2$, where $\gamma'_2$ connects $f^{N_1-1}(z_0)$ and some preimage $f^{-1}(q').$ This whole curve $\gamma'_2$ will be inside the same component of $W_{2}, i.$ Then by induction,  $\gamma'_{N_1}$ connects $z_0$ and some preimage $f^{-N_1}(q')$ will be inside some component of $W_{N_1, j}$ as well. 
			
			Suppose 
			$\gamma'_{\tilde{N}}\in W_{\tilde{N}}$ 
			connecting
			$f^{N_1+1-\tilde{N}}(z_0)$ and some preimage $f^{-\tilde{N}+1}(q')$ is the first curve of $\gamma'_1, \cdots, \gamma'_{N_1}$ landed in $V_{n_0+1}$ for some integer $1<\tilde{N}< N_1$. 
			Since $W_{\tilde{N}}\subset\subset V_{n_0+1}\subset\mathcal{A}(\infty),$ there is a constant $C_0$ so that $d_{W_{\tilde{N}}}\big(f^{N_1+1-\tilde{N}}(z_0), f^{-\tilde{N}+1}(q')\big)\leq C_0.$
			
			Then by  Corollary \ref{cor1}, we have 
			\begin{equation}\label{eq3}
				d_{\mathcal{A}(\infty)}(z_0, f^{-N_1}(q'))\leq d_{V_{n_0+1}}\big(z_0, f^{-N_1}(q')\big), 
			\end{equation}
			and
			\begin{equation}\label{eq4}
				d_{V_{n_0+1}}\big(f^{N_1+1-\tilde{N}}(z_0), f^{-\tilde{N}+1}(q')\big) \leq d_{W_{\tilde{N}}}\big(f^{N_1+1-\tilde{N}}(z_0), f^{-\tilde{N}+1}(q')\big).
			\end{equation}
			By the inequalities (\ref{eq1}), we have
			\begin{equation}\label{eq5}
				d_{V_{n_0+1}}\big(z_0, f^{-N_1}(q')\big)\leq 
				d_{V_{n_0+1}}\big(f(z_0), f^{-N_1+1}(q')\big)\leq
				\cdots\leq
				d_{V_{n_0+1}}\big(f^{N_1+1-\tilde{N}}(z_0), f^{-\tilde{N}+1}(q')\big) .
			\end{equation}
			Hence,  we have
			$$d_{\mathcal{A}(\infty)}(z_0, f^{-N_1}(q'))\leq C_0.$$
			
			If $z_0$ inside any $U_n\subset\mathcal{A}(\infty), n\leq n_0.$ We can choose some $q=p$. In this case, since $U_n\subset\subset\mathcal{A}(\infty),$
			there is a constant $C'_0$ so that $d_{\mathcal{A}(\infty)}(z_0, q)\leq C'_0.$
			
			Therefore, let $C=\max \{C_0, C'_0\},$ there always exist a point $q \in  \cup_k f^{-k}(p), k\geq0 $ such that 
			$$d_{\mathcal{A}(\infty)}(z_0, q)\leq C$$ for any $z_0\in\mathcal{A}(\infty).$

		\end{proof}

		\section{Dynamics of rational functions inside attracting basins in $\mathbb{P}^1$}\label{sec4}
		
		Let $R: \mathbb{P}^1\rightarrow\mathbb{P}^1$ be a rational function and
		$\Omega$ be an invariant Fatou component. Section \ref{sec1} provides a classification that offers a comprehensive understanding of the long-term behavior of all orbits within $\Omega$. However, the precise movement of iterates inside $\Omega$ remains unknown. This section aims to provide a more detailed description of how the iterates of a point evolve within an invariant Fatou component and the whole basin of attraction. Subsequently, we present our main results, namely Theorem A.

		\begin{thmA}\label{thm1}
			Let $R(z)$ be a rational function on $\mathbb P^1$,
			$\mathcal {A}(p)$ be the basin of attraction of an attracting fixed point $p$ of $R$, and $\Omega_i$ $ (i=1, 2, \cdots)$ be the connected components of $\mathcal{A}(p)$. Assume $\Omega_1$ contains $p.$
			
			If all $\Omega_i$ are simply connected  and $\{R^{-1}(p)\}\cap \Omega_1\neq\{p\}$, then there is a constant $C$ so that for every point $z_0$ inside any $\Omega_i$, there exists a point $q\in \cup_k R^{-k}(p)$ inside $\Omega_i$ such that $d_{\Omega_i}(z_0, q)\leq C$, where $d_{\Omega_i}$ is the Kobayashi distance on $\Omega_i.$

			Suppose all $\Omega_i$ are simply connected  and $\{R^{-1}(p)\}\cap \Omega_1=\{p\}$; 	or at least one $\Omega_i$ is not simply connected. Let $p_0\in\Omega_1$ close to $p.$ Then there exists a constant $C$ so that for any $z_0\in \Omega_i$, there
			is a point $q\in \cup_k R^{-k}(p_0), k\geq0$ so that the Kobayashi distance
			$d_{\Omega_i}(z_0, q)\leq C.$
		\end{thmA}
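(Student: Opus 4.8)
The plan is to reduce Theorem A to the polynomial-type statements already established (Theorem \ref{the3}, Corollary \ref{cor2}, and especially the strategy of Theorem \ref{them1}) by exploiting three structural facts about rational maps: the basin of attraction has finitely many components that are cyclically permuted in a controlled way by $R$; near the attracting fixed point $p$ the map is locally linearizable (or, if $p$ is superattracting, conjugate to $z\mapsto z^k$ in B\"ottcher coordinates), so a small topological disc $D_0\ni p$ with smooth boundary can be chosen; and after finitely many pullbacks $U_{n_0}=R^{-n_0}(D_0)$ contains every critical point lying in $\mathcal A(p)$. Set $U_n=R^{-n}(D_0)$, $W_{n+1}=U_{n+1}\setminus\overline U_n$, and $V_n=\mathcal A(p)\setminus\overline U_n$. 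The first step is to prove the analogue of Lemma \ref{lem1}: each $W_n$ has finitely many components, the map $R\colon W_{n+1}\to W_n$ is a (possibly branched) finite cover, and, starting from the fact that $D_0$ contains $p$ itself together with the chosen $p_0$ (resp.\ a genuine preimage of $p$ in the simply connected, nondegenerate case), every component of every $W_n$ contains a point of $\bigcup_k R^{-k}(p_0)$ (resp.\ $\bigcup_k R^{-k}(p)$) by lifting through the covering maps inductively.

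Next I would set up the distance-decreasing engine exactly as in Theorem \ref{them1}. For $N_0>n_0$ the map $R\colon V_{N_0+1}\to V_{N_0}$ is an \emph{unbranched} covering, so given $z,w\in V_{N_0}$ one picks preimages $z',w'\in V_{N_0+1}$ with $d_{V_{N_0+1}}(z',w')=d_{V_{N_0}}(z,w)$, and then Corollary \ref{cor1} gives $d_{V_{N_0}}(z',w')\le d_{V_{N_0}}(z,w)$: pulling back is a contraction for the Kobayashi distance on the truncated basin. Now take an arbitrary $z_0\in\Omega_i$. If $z_0$ lies in some $U_n$ with $n\le n_0$ we are done since $U_n\Subset\mathcal A(p)$ and we may take $q=p_0$ (or $q=p$), getting a bound from compactness. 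Otherwise the orbit of $z_0$ eventually enters the "first annular shell" $W_1$, say $R^{N_1}(z_0)\in W_1$; choose a preimage $q'\in W_1$ of $p_0$ in that same component of $W_1$, connect them by a curve, lift the curve step by step back to a curve joining $z_0$ to some $R^{-N_1}(q')\in\bigcup_k R^{-k}(p_0)$, always staying inside the correct component thanks to the Lemma. Tracking the iterates down through $W_{\tilde N}\Subset V_{n_0+1}$ for the largest $\tilde N<N_1$ at which the lifted curve first lands in $V_{n_0+1}$, compactness of $\overline{W_{\tilde N}}$ inside $V_{n_0+1}$ gives a uniform bound $C_0$ there, and iterating the contraction inequality $d_{V_{n_0+1}}(z_0,R^{-N_1}(q'))\le\cdots\le d_{V_{n_0+1}}\big(R^{N_1+1-\tilde N}(z_0),R^{-\tilde N+1}(q')\big)$ together with Corollary \ref{cor1} pushes this down to $d_{\mathcal A(p)}(z_0,q)\le C_0$; since $\mathcal A(p)\subseteq\Omega_i$ is false in general we instead work component-wise and use that $\Omega_i$ sits inside the relevant $V$-set, so $d_{\Omega_i}(z_0,q)\le C$ follows. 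Taking $C=\max\{C_0,C_0'\}$ over the finitely many relevant compact pieces finishes it.

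Two points need extra care relative to the polynomial case and I expect the branching/topology bookkeeping to be the main obstacle. First, $\mathcal A(p)$ now has several components which are permuted by $R$; one must check that the shell decomposition $W_n$ and the covering-degree argument respect this permutation, i.e.\ the number of components of $W_n$ stays finite and the "there is a preimage of $p_0$ in every component" induction does not break when $R$ moves one component of $W_n$ into another. This is handled by noting $R$ maps Fatou components to Fatou components, that there are finitely many components of $\mathcal A(p)$ (each containing a point of the finite critical orbit data or being an unbranched preimage of such), and that the B\"ottcher/linearizing disc $D_0$ can be chosen so that every $\partial U_n$ is a finite union of smooth curves avoiding all critical orbits and all preimages of $p_0$ — so $d_{\Omega_i}$ is continuous up to these boundaries and the small perturbation of $z_0$ keeping its orbit off every $\partial U_n$ is legitimate. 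Second, in the degenerate simply connected case $\{R^{-1}(p)\}\cap\Omega_1=\{p\}$ one genuinely cannot use backward orbits of $p$ (they never leave $\{p\}$ inside $\Omega_1$), which is exactly why the statement switches to a nearby base point $p_0\ne p$; the verification that a generic $p_0\in\Omega_1$ close to $p$ \emph{does} have preimages spreading into every shell component is the same lifting argument, using that $R$ restricted to a punctured neighbourhood of $p$ is a nontrivial covering. Once these topological points are secured, everything else is the distance-decreasing bookkeeping of Theorem \ref{them1} carried out on each of the finitely many components in turn.
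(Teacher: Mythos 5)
Your proposal is correct in outline, but it takes a genuinely different route from the paper's for the multi-component, non-simply-connected case. The paper does run the shell decomposition $U_n, W_n, V_n$ with the covering/distance-decreasing argument (Lemma~\ref{lem1} and inequalities~(\ref{eq1})--(\ref{eq5})), but \emph{only} when $\mathcal{A}(p)=\Omega_1$ is a single component, by reusing Theorem~\ref{them1} verbatim. When $\mathcal{A}(p)$ has at least two (hence infinitely many) components, the paper does \emph{not} globalize the shell argument. Instead it first settles the bound on $\Omega_1$, and then pulls it back one preimage at a time: for $z_0\in\Omega_2$ with $R(\Omega_2)=\Omega_1$, it constructs nested compact sets $S_1\subset\Omega_1$ and $S_2\subset S_3\subset S_4, S_5\subset\Omega_2$ isolating the critical points, splits into four sub-cases according to whether $z_0$ and the selected preimage $R^{-1}(q)$ lie inside $S_3$ or in the unbranched-covering region $D=\Omega_2\setminus S_3$, and in the hardest sub-case quotes Wold's asymptotic estimate~\cite{RefW} for the Kobayashi metric under the unbranched covering $R\colon D\to\Omega_1$; this is then iterated component by component. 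You instead run the whole engine globally on $\mathcal{A}(p)$, observing that $R\colon V_{N_0+1}\to V_{N_0}$ is still an unbranched covering once $U_{n_0}$ absorbs all critical points of $R$ in the basin, and that each connected component of $V_{n_0+1}$ sits inside a single $\Omega_i$ so Corollary~\ref{cor1} transfers the bound to $d_{\Omega_i}$. Your route avoids Wold's theorem and the $S_1,\dots,S_5$ case analysis entirely, at the cost of heavier topological bookkeeping in the analogue of Lemma~\ref{lem1}: the $U_n$ are no longer connected, the shells $W_n$ have components scattered across many $\Omega_i$, and the induction placing a preimage of $p_0$ in every component of every $W_n$ has to be checked carefully across the permutation of components. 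You flag exactly these points, so the proposal reads as a coherent alternative rather than a gap.

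One outright error in your write-up: you assert that ``there are finitely many components of $\mathcal A(p)$.'' This is false; a rational map's attracting basin has either exactly one component or infinitely many, and the paper explicitly uses the latter. What your argument actually needs — and what is true — is that each shell $W_n$ has finitely many components (since $\overline{W}_n$ is compact and its boundary lies in the finite union of smooth curves $\partial U_{n-1}\cup\partial U_n$) and that only finitely many $\Omega_i$ contain critical points, so beyond finitely many components all the maps $R\colon\Omega_i\to R(\Omega_i)$ are unbranched. Replace the false finiteness claim with this, and the rest of your argument stands.
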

		\begin{proof}
			If all $\Omega_i$ are simply connected  and $\{R^{-1}(p)\}\cap \Omega_1\neq\{p\}$, then the theorem holds by Theorem B in the paper \cite{RefH1}. 
			
			Suppose all $\Omega_i$ are simply connected and $\{R^{-1}(p)\}\cap \Omega_1=\{p\}$, then we choose $q$ to be a preimage of $p_0.$ Then the theorem holds by Theorem B in the paper \cite{RefH1}.
			
			If at least one $\Omega_i$ is not simply connected, there are two possible cases.
			
			(1) $\mathcal{A}(p)=\Omega_1$. Then, in fact, the proof is the same as for Theorem \ref{them1} in section 3.
			
			(2) $\mathcal{A}(p)$ has at least two connected components, then $\mathcal{A}(p)$ has infinity many components (see the proof of Theorem B in the paper \cite{RefH1}). If $z_0$ is inside $\Omega_1,$ then it is the same as the case (1) above. If $z_0\in \Omega_2$ and $R(\Omega_2)=\Omega_1.$ We know $R(z_0)\in \Omega_1.$ There exists a constant $C$ so that for every $R(z_0)\in\Omega_1$, there exists a point $q\in \cup_k R^{-k}(p_0)\in\Omega_1$ such that $d_{\Omega_1}(R(z_0), q)\leq C.$ 
			Then we need to consider if we can find a preimage of $q$ inside $\Omega_2$ such that the Kobayashi distance between $R^{-1}(q)$ and $z_0$ can still be uniformly bounded.

			We choose a compact set $S_1\in\Omega_1$ such that all critical points inside $\Omega_1$ belongs to $S_1$ and no critical points on the boundary of $S_1.$ Note that if there is no critical points inside $\Omega_1,$ then we just choose any compact set inside $\Omega_1.$ Then we choose a compact set $S_2\in\Omega_2$ such that $R^{-1}(S_1)\subseteq S_2.$ 
			After that, we choose a compact set $S_3$ such that $S_3$ contains $S_2$ and all critical points which are inside $\Omega_2,$  and there are no critical points on the boundary of $S_3$. Note that  if $\Omega_2$ has no critical points, then we just need to choose a compact set $S_3$ such that $S_2\subset S_3.$ Let $D:=\Omega_2\setminus S_3.$ And we know $R(z_0)\in\Omega_1$ and there is a point $q\in\Omega_1$ so that $d_{\Omega_1}(R(z_0), q)\leqslant C.$ 
			Then we can choose a preimage of $q$ inside $\Omega_2.$

			And there are four cases for distributing $z_0$ and the preimage of $p_0$ we want in $\Omega_2.$
			\begin{enumerate}
				\item 
				If $z_0, R^{-1}(q)\in S_3,$ then there is a constant $C'$ so that  $d_{S_3}(z_0, R^{-1}(q))\leq C'$ since $S_3\subset\subset \Omega_2.$ Thus, by Corollary \ref{cor1}, $d_{\Omega_2}(z_0, R^{-1}(q))\leq d_{S_3}(z_0, R^{-1}(q))\leq C'.$
				
				\item If $z_0\in S_3, q\in D,$ then we can choose a compact set $S_4\in\Omega_2$ including $z_0, R^{-1}(q)$ and $S_3\subset S_4.$ Then there is a constant $C''$ such that $d_{S_4}(z_0, R^{-1}(q))\leq C''$ since $S_4\subset\subset \Omega_2.$ Hence, by Corollary \ref{cor1} again, $d_{\Omega_2}(z_0, R^{-1}(q))\leq d_{S_4}(z_0, R^{-1}(q))\leq C''.$

				\item If $z_0\in D, q\in S_3,$ then this case is the same as case 2. 
				\item If $z_0, q\in D,$ 
				then $R(z): D\rightarrow \Omega_1$ is an unbranched covering.  
				We choose a compact subset $S_5\in D$ so that 
				and $z_0, R^{-1}(q)$ inside $S_5,$ but $z_0, R^{-1}(q)$ are neither on the boundary of $S_5$ nor very close to $\partial S_5.$ Otherwise, we choose a litter big compact set to be $S_5.$
				Then by Corollary \ref{cor1}, we know 
				$$d_{\Omega_2}(z_0, R^{-1}(q))\leq d_{S_5}(z_0, R^{-1}(q)).$$
				Next,  we need to show there is a constant $C'''$ such that $d_{S_5}(z_0, R^{-1}(q))\leq C'''$. 
				This is true by Theorem 3.4 in Wold’s
				paper \cite{RefW}. The proof is essentially the same as the proof of Case 4 in the proof of Theorem B in Hu's paper \cite{RefH1}.

			\end{enumerate}

			We choose $C=\max\{ C', C'', C'''\}, $ for every point $z_0$ inside any $\Omega_2$, there exists a point $q\in \cup_k R^{-k}(p_0)$ inside $\Omega_2$ such that $d_{\Omega_2}(z_0, q)\leq C$, where $d_{\Omega_2}$ is the Kobayashi distance on $\Omega_2.$ 
			
			If $z_0$ inside some other component $\Omega_i$ so that $R^{j}(\Omega_i)=\Omega_1,$ we repeat the procedure above in case (2), we still can prove this theorem holds.
			
			Therefore, there is a constant $C$ for every point $z_0$ inside any $\Omega_i$, there exists a point $q\in \cup_k R^{-k}(p)$ inside $\Omega_i$ such that $d_{\Omega_i}(z_0, q)\leq C$, where $d_{\Omega_i}$ is the Kobayashi distance on $\Omega_i.$

		\end{proof}

		\section{Dynamics of entire functions inside attracting basins in $\mathbb{C}$}\label{sec5}
		
		In this section, we also discuss the corresponding result for entire functions in $\mathbb C.$
		There, we show that the theorem fails in general. Nevertheless, the theorem still holds
		after some mild conditions. (Finitely many critical points.)
		
		We construct an entire function $f(z)$ with an attracting fixed point 
		so that Theorem A fails.
		
		We want $f(z)$ to have the following properties:\\
		(i) $f(0)=0, |f'(0)|<1;$\\
		(ii) There exists an increasing  sequence of strictly positive numbers
		$A_n\rightarrow \infty$,
		so that $f(\{A_{2n}<|z|<A_{2n+1}\})\subset \subset\{A_{2n+2}<|z|<A_{2n+3}\}$;\\
		(iii) There exists a sequence $z_n, A_{2n+1}<|z_n|<A_{2n+2}$ so that
		$f(z_n)=0 $ and $f$ vanishes to at least order $n$ at $z_n.$
		
		We construct $f$ inductively. Suppose that $f$ is an entire function
		and that $A_{2n}<A_{2n+1}$ Choose a new $f$, we do this in two steps.
		First pick a point $z_n$ with $|z_n|> A_{2n+1}.$ We first let the new $f$ be given by
		$f_1= f (z-z_n)^n e^{h_n}$ where $h_n$ is an entire function which approximates  $-\ln (z-z_n)^n$ very well on $|z|<A_{2n+1}$.
		Then $f_1$ approximates $f$ as well as we want on $|z|<A_{2n+1}$
		and also $f_1$ vanishes to order $n$ at $z_n.$
		Next, we let 
		$f_2= f_1+ (\frac{2z}{A_{2n}})^N$. Then for $N$ large enough, $f_2$ and $f$ are as close as we want on $|z|<A_{2n}/4$. Moreover,
		the image of $\{A_{2n}<|z|<A_{2n+1}\}\subset \{A_{2n+2}<|z|<A_{2n+3}\}$
		where $A_{2n+2 }$ can be chosen as large as we want and in particular bigger than $|z_n|.$

		Let $\Omega$ be the basin of attraction of $0$ and let $U_0$ be the immediate basin. Then $U_0$ is simply connected. 
		Let $U_n$ be the Fatou component containing $z_n.$
		Then $U_n\subset \{A_{2n+1}<|z|<A_{2n+2}\}$. By the maximum principle, it must be simply connected. The map $f:U_n\rightarrow U_0$ is conjugate to 
		a map $F$ on the unit disc with the property that $F(z)=z^n g$.
		Consider the point $1/2$ in $U_0$. The primages of this point will be arbitrarily close to the boundary of $U_n$. Hence, the Kobayashi distance will go to infinity. So there is no uniform constant $C.$
		
		On the other hand, we have the following positive results:
		
		\begin{thm}
			Suppose that $f(z)$ is an entire function with $0$ as an attracting fixed point with nonzero derivative at the origin. Suppose that
			$f$ has finitely many critical points. Suppose also that all Fatou components are bounded open sets. Then Theorem A holds.
		\end{thm}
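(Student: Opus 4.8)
The plan is to adapt the proof of Theorem~A in the rational case (Section~\ref{sec4}), together with the proof of Theorem~\ref{them1} underlying it, substituting for the ingredients that relied on the geometry of $\mathbb P^1$ (principally the B\"ottcher normal form near $\infty$) the analogous facts for an entire $f$ that the two hypotheses provide. Write $\mathcal A=\mathcal A(0)$, let $U_0$ be the immediate basin (the component of $\mathcal A$ containing $0$), and fix a small round disc $\delta\subset\subset U_0$ around $0$, chosen in K\oe nigs linearizing coordinates so that the component of $f^{-1}(\delta)$ at $0$ is a slightly larger such disc and $p_0\in\delta$. Writing $U_n$ for the component of $f^{-n}(\delta)$ containing $0$, a standard path argument (pull a path in $U_0$ from $0$ to $z$ far enough forward that it lands in $\delta$) shows the $U_n$ are connected, nested, and exhaust $U_0$; hence, since $f$ has only finitely many critical points and those lying in $\mathcal A$ converge to $0$, there is $n_0$ with $\overline{U_{n_0}}$ containing every critical point of $f$ that lies in $U_0$, and I set $V_n=U_0\setminus\overline{U_n}$, $W_n=U_n\setminus\overline{U_{n-1}}$.

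Two structural facts drive the argument. First, \emph{if $\Omega,\Omega'$ are components of $\mathcal A$ with $f(\Omega)\subseteq\Omega'$, then $f:\Omega\to\Omega'$ is a proper surjection, hence a finite branched covering}: $f(\Omega)$ is open in $\Omega'$, and it is also closed in $\Omega'$ because a sequence in $\Omega$ with images converging to $a\in\Omega'$ subconverges in $\overline\Omega$ (boundedness) to a point $w$ with $f(w)=a$, and $w\notin\partial\Omega$, for otherwise $w\in J(f)$ would force $a=f(w)\in J(f)$, contradicting $a$ Fatou. This is precisely where ``all Fatou components bounded'' is used: an omitted value would be an asymptotic value attained along a path to $\infty$ inside $\Omega$, impossible once $\Omega$ is bounded. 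Second, \emph{a covering link is branched only when it carries a critical point}: if $f:\Omega\to\Omega'$ (or $f:W_{n+1}\to W_n$, or $f:V_{n+1}\to V_n$ for $n\ge n_0$) contains none of the finitely many critical points, it is an unbranched covering, hence --- both domains being bounded, so hyperbolic --- a local isometry for the Kobayashi metrics, so curves and distances transport without loss along suitable lifts, exactly as in \eqref{eq1}.

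With these, the proof of Theorem~\ref{them1} transcribes to the single component $U_0$ verbatim, with $\delta$ playing the role of the B\"ottcher disc: the surjectivity fact supplies the analogue of Lemma~\ref{lem1} (a preimage of $p_0$ in each $W_n$-component one meets), the branching fact supplies the unbranched covers $V_{n+1}\to V_n$ for $n\ge n_0$ and hence \eqref{eq1}, and the portion of the orbit lying in the relatively compact set $\overline{U_{n_0}}\cap U_0$ contributes a bounded amount. This gives, for $z_0$ in a component $\Omega_i$ and $N$ least with $f^N(\Omega_i)=U_0$, a point $q'\in\bigcup_k f^{-k}(p_0)$ with $d_{U_0}(f^N(z_0),q')\le C_0$. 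One then lifts $q'$ back along the chain $\Omega_i=\Omega^{(N)}\to\Omega^{(N-1)}\to\cdots\to\Omega^{(0)}=U_0$ (each link surjective by the first fact), controlling the Kobayashi distance by the four-case / $S_1,\dots,S_5$ argument of the proof of Theorem~A together with Theorem~3.4 of \cite{RefW}: across an unbranched link the lift is distance-non-increasing by \eqref{eq1} and \eqref{eq5}, and across a link carrying a critical point the two points sit in a relatively compact subset of the relevant component, so the distance there is bounded. The simply connected sub-cases follow the argument of Theorem~B of \cite{RefH1} via the same two facts, and taking $C$ to be the maximum of the finitely many resulting constants finishes it.

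The crux --- and the reason both hypotheses are indispensable --- is keeping the constant uniform over the (in general infinitely many) components while lifting $q'$ back: one needs a preimage of $p_0$ to survive in every component visited, which is the surjectivity fact and uses bounded components; and one needs the Kobayashi distance not to explode, which is the branching fact and uses finitely many critical points, since then any chain to $U_0$ contains only finitely many branched links, each contributing a bounded, non-compounding amount. The counterexample constructed just above this theorem shows exactly this failing: there $f:U_n\to U_0$ is conjugate to $z\mapsto z^{n}g(z)$ on the disc, the ramification order tends to $\infty$, the preimages of a fixed point of $U_0$ climb to $\partial U_n$, and the Kobayashi distance diverges. A secondary, purely transcendental, point is that $f^{-1}(\delta)$ may have infinitely many components (for instance when $f^{-1}(0)$ is infinite), so the literal statement ``each $W_n$ has finitely many components'' in Lemma~\ref{lem1} can fail; this causes no harm, because for a given $z_0$ one only ever follows the finite chain of components met by its forward orbit.
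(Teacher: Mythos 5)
Your proposal is correct and takes the same route as the paper's (very brief, four-sentence) proof, which reduces matters to the polynomial case by noting that bounded Fatou components of an entire function are simply connected and that finitely many critical points force all but finitely many of the component-to-component links $f:\Omega_1\to\Omega_2$ to be biholomorphisms. The main thing you add is what the paper takes for granted: a justification that $f:\Omega\to\Omega'$ between bounded components is proper and surjective, hence a finite branched cover — the genuine transcendental subtlety here and the place where ``all Fatou components bounded'' is used beyond supplying simple connectivity — together with the observation that, although $f^{-1}(\delta)$ may now have infinitely many components globally, a fixed orbit only ever visits a finite chain of them.
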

		
		\begin{proof}
			Since all Fatou components are bounded, they are also simply connected.
			On the immediate basin of attraction, the map behaves similarly with
			a polynomial on the immediate basin of attraction. If we consider the 
			map $f:\Omega_1\rightarrow \Omega_2$ on preimages of the immediate basin, $f$ will be biholomorphic if there is no critical point in $\Omega_1$.
			Furthermore, only for finitely many choices of $\Omega_1$ will there be a critical point inside. So the situation is the same as for a polynomial.
		\end{proof}


\begin{thebibliography}{}
			
			
			
			
			
			\bibitem{RefB} Beardon, A.F.: Iteration of Rational Functions. Springer-Verlag, New York (1991)
			
			\bibitem{RefBM}	Bergweiler, W., Morosawa, S.: Semihyperbolic entire functions, Nonlinearity, 15, 1673 - 1684 (2002)
			
			\bibitem{RefCG} Carleson, L., Gamelin, T. W.: Complex dynamics. Springer-Verlag, New York (1993)   
			
			\bibitem{RefF} Fornæss, J. E.: Dynamics in Several Complex Variables, CBMS Regional Conference Series in Mathematics (1996)
			
			\bibitem{RefFatou} 	Fatou, P.: \'{E}quations fonctionnelles, Bull. Soc. Math. France, 47, 161-271 (1919); 48, 33-94 (1920)
			

			
			
			
			
			\bibitem{RefH1} Hu, M., Interior Dynamics of Fatou Sets,  Annali di Matematica, 2023.  https://doi.org/10.1007/s10231-023-01344-9
			
			\bibitem{RefH2} Hu, M., Dynamics inside Parabolic Basins, arXiv preprint, arXiv:2208.03756, 2022.
			
			\bibitem{RefH3} Hu, M., Dynamics inside Fatou Sets in Higher Dimensions, Annali della Scuola Normale Superiore di Pisa, Classe di Scienze, 2023. 
			https://doi.org/10.2422/2036-2145.202301-004                
			
			\bibitem{RefK} Krantz, S.: The Caratheodory and Kobayashi Metrics and Applications in Complex Analysis, The American Mathematical Monthly, 115, 304-329 (2008) 
			
			\bibitem{RefM} Milnor, J.: Dynamics in One Complex Variable. Princeton University Press, Princeton (2006) 
			
			\bibitem{RefS} Sullivan, D.: Quasiconformal Homeomorphisms and Dynamics I. Solution of the Fatou-Julia Problem on Wandering Domains, Annals of Mathematics, Second Series, Vol. 122, No. 2, 401-418 (1985)
			
			\bibitem{RefW} Wold, E. F.: Asymptotics of Invariant Metrics in the normal direction and a new characterisation of the unit disk, Math. Z. 288, 875–887 (2018)
			
			
			
		\end{thebibliography}
	\end{document}